\begin{document}
\setlength{\unitlength}{10mm}
%________________________Please insert your necessary newcommands____________________________________________
\newcommand{\f}{\frac}
\newtheorem{theorem}{Theorem}
\newtheorem{lemma}{Lemma}
\newtheorem{remark}{Remark}
\newtheorem{definition}{Definition}
\newcommand{\sta}{\stackrel}
%_________________________Pleae insert full name and address of authors______________________________________
\title{Interval extension of the three-step Kung and Traub's method}
\author{
Tahereh Eftekhari
\\
{\scriptsize School of Mathematics, Iran University of Science $ \& $ Technology (IUST), Narmak, Tehran 16846 13114, Iran}
\\
{\scriptsize t.eftekhari2009@gmail.com}
}

\date{}
\maketitle

\begin{abstract}
\noindent In this paper we produce an interval extension of the three-step Kung and Traub's method for solving nonlinear equations. Furthermore, the convergence analysis of the new method is discussed and this method is compared to already present methods.

\vspace{.5cm}\leftline{\textbf{Keywords: }Interval analysis, Nonlinear equations, Kung and Traub's method, Guaranteed convergence.}
\vspace{.5cm}\leftline{\textbf{Mathematics Subject Classification 2010: }65H05.}
\end{abstract}
%_____________________________________________________________________

\section{Introduction}\label{sec:1}
Solving nonlinear equations is one of the basic problems in scientific and engineering applications. For this purpose, interval methods has been developed for finding the enclosure solutions of nonlinear equations, see \cite{E1,E2,MKC,P,SG}. These methods can be used to refine enclosures to solutions of nonlinear equations, to prove existence and uniqueness of such solutions, and to provide rigorous bounds on such solutions. Interval methods can also prove non-existence of solutions within regions.
We first introduce some basic properties of interval arithmetic from \cite{M1,MKC}. A real interval is a closed connected subset of $ \mathbb{R} $, which is of the form
\[X = [\underline {x} , \overline {x} ] = \{ x\in \mathbb{R}\, | \,\, \underline {x} \leq x \leq \overline {x} \},\] where $ \underline {x} $ and $ \overline {x} $ represent, respectively, the lower and the upper bounds of $ X $. Intervals with $ \underline {x} = \overline {x} $ are called thin, point or degenerate intervals, while intervals with $ \underline {x}\leq \overline {x} $
are called thick or proper intervals. The set of all closed real intervals is denoted by $ \mathbb{IR} $. The width, radius, mid-point and absolute value of an interval $ X = [\underline {x} , \, \overline {x} ] $ are respectively defined as
\[\begin{array}{l}
w(X) = {\overline {x}} - {\underline {x}}, \\
rad (X) = \dfrac{1}{2}({\overline {x}} - {\underline {x}}), \\
m(X) = \dfrac{1}{2}({\underline {x}} + {\overline {x}}), \\
|X| = \max \{ |\underline x |,|\overline x |\}.
\end{array}\]
For $ X = [\underline x ,\overline x ] $ and $ Y = [\underline y ,\overline y ] $, $ X\oplus Y $ with $ \oplus\in \{ +, -, ., /\} $ is defined by $ X\oplus Y = \{ x\oplus y \, |\, x\in X,\, y\in Y \} $. We have
\[\begin{array}{l}
X+Y = [\underline x + \underline y ,\overline x + \overline y ],\\
X-Y = [\underline x - \overline y ,\overline x - \underline y ], \\
X.Y = [ \min\{ \underline x \underline y , \underline x \overline y , \overline x \underline y , \overline x \overline y \}, \max \{ \underline x \underline y , \underline x \overline y , \overline x \underline y , \overline x \overline y \} ], \\
X/ Y = X. (1/Y), \\
1/Y = [ 1/\overline y , 1/\underline y ], \quad 0\notin Y.
\end{array}\]
Note that subtraction and division are not the inverse operations of addition and respectively multiplication. An interval $ X $ is a subset of an interval $ Y, $ is denoted by $ X\subseteq Y, $ if and only if $ \underline y \leq \underline x $ and $ \overline y \geq \overline x $. The intersection of two intervals $ X $ and $ Y $, denoted $ X\cap Y $, is the set of all elements that belong to both $ X $ and $ Y $. That is,
\[ X\cap Y = \{ z \, |\, z\in X , z\in Y \} = [\max\{\underline x , \underline y\}, \min\{\overline x , \overline y\}]. \]
The intersection of two intervals $ X $ and $ Y $ is empty if either $ \overline y < \underline x $ or $ \overline x < \underline y $, in this case we write $ X\cap Y = \emptyset $.

%%%%%%%%%%%%%%%%%%%%%%%%%%%%%%%%%%%%%%%%%%%%%%%%%%%%%

\begin{definition}\label{f1-1}
We say that $ F $ is an interval extension of $ f $ on the interval $ X = [\underline x ,\overline x ] $, if
\[\begin{array}{l}
F([x, x]) = f (x),\quad (restriction),\\
F(X) \supseteq \{f (x)\, |\, x\in X \},\quad (inclusion).
\end{array}\]
\end{definition}

%%%%%%%%%%%%%%%%%%%%%%%%%%%%%%%%%%%%%%%%%%%%%%%%%%%%%

\begin{definition}\label{f1}
$ F $ is Lipschitz' interval extension of $ f $ in an interval $ {X^{(0)}} $ if there is a constant $ L $ such that $ w(F(X)) \leq L\, w(X) $ for every $ X \subseteq {X^{(0)}} $.
\end{definition}

%%%%%%%%%%%%%%%%%%%%%%%%%%%%%%%%%%%%%%%%%%%%%%%%%%%%%

\begin{definition}\label{f1-2}
An interval valued function $ F $ is inclusion monotonic if $ X\subseteq Y $ implies $ F(X)\subseteq F(Y) $.
\end{definition}

%%%%%%%%%%%%%%%%%%%%%%%%%%%%%%%%%%%%%%%%%%%%%%%%%%%%%

\begin{definition}\label{f2}
An interval sequence $ {X^{(k)}} $ is nested if $ {X^{(k+1)}} \subseteq {X^{(k)}} $ for all $ k. $
\end{definition}

%%%%%%%%%%%%%%%%%%%%%%%%%%%%%%%%%%%%%%%%%%%%%%%%%%%%%

\begin{lemma}[see \cite{MKC}]\label{f3}
Suppose $ \{ {{X^{(k)}}} \} $ is such that there is a real number $ x\in {X^{(k)}} $ for all $ k. $ Define
 $ \{ {{Y^{(k)}}} \} $ by $ {Y^{(1)}} = {X^{(1)}} $ and  $ {Y^{(k+1)}} = {X^{(k+1)}} \cap {Y^{(k)}} $ for all $ k = 1, 2,\cdots. $ Then
$ {Y^{(k)}} $ is nested with limit $ Y, $ and \[ x \in Y \subseteq {Y^{(k)}},\qquad \forall k. \]
\end{lemma}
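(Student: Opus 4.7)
The statement is a nested-intervals claim, so the plan is to prove three things in sequence: (i) each $Y^{(k)}$ is a non-empty real interval, (ii) the sequence is nested, and (iii) the intersection $Y := \bigcap_{k\ge 1} Y^{(k)}$ is a non-empty closed interval containing $x$ and contained in every $Y^{(k)}$.

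\textbf{Step 1 (non-emptiness via induction on membership of $x$).} First I would show by induction that $x\in Y^{(k)}$ for all $k$. The base case is $x\in X^{(1)} = Y^{(1)}$, which holds by hypothesis. For the inductive step, assume $x\in Y^{(k)}$; since $x\in X^{(k+1)}$ by hypothesis, the description of intersection given just before the lemma yields $x\in X^{(k+1)}\cap Y^{(k)} = Y^{(k+1)}$. In particular $Y^{(k)}\neq\emptyset$, and because an intersection of two closed intervals is again a closed interval (possibly empty, but here not), each $Y^{(k)}\in\mathbb{IR}$ is well-defined.

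\textbf{Step 2 (nestedness).} This is immediate: $Y^{(k+1)} = X^{(k+1)}\cap Y^{(k)}\subseteq Y^{(k)}$ by the very definition of intersection, so $\{Y^{(k)}\}$ satisfies Definition~\ref{f2}.

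\textbf{Step 3 (existence of the limit interval).} Writing $Y^{(k)}=[\underline{y}^{(k)},\overline{y}^{(k)}]$, Step~2 gives $\underline{y}^{(k)}\le \underline{y}^{(k+1)}$ and $\overline{y}^{(k+1)}\le\overline{y}^{(k)}$, and Step~1 gives $\underline{y}^{(k)}\le x\le \overline{y}^{(k)}$ for every $k$. Thus $\{\underline{y}^{(k)}\}$ is monotone non-decreasing and bounded above by $x$, while $\{\overline{y}^{(k)}\}$ is monotone non-increasing and bounded below by $x$; both therefore converge, say to $\underline{y}$ and $\overline{y}$ respectively, with $\underline{y}\le x\le\overline{y}$. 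I would then set $Y:=[\underline{y},\overline{y}]$, so that $Y\in\mathbb{IR}$ and $x\in Y$. The containment $Y\subseteq Y^{(k)}$ follows from monotonicity of the endpoints: $\underline{y}^{(k)}\le \underline{y}$ and $\overline{y}\le \overline{y}^{(k)}$ for every $k$. Finally, $Y$ is the limit of the nested sequence in the usual sense that $Y = \bigcap_{k} Y^{(k)}$, because any point of every $Y^{(k)}$ lies between $\underline{y}^{(k)}$ and $\overline{y}^{(k)}$ for all $k$, hence between $\underline{y}$ and $\overline{y}$.

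\textbf{Main obstacle.} There is no real obstacle here — this is a standard Cantor-type nested-interval argument, and the only subtlety is remembering that the intersection of closed intervals can a priori be empty, which is precisely why the hypothesis "$x\in X^{(k)}$ for all $k$" is used in Step~1 to certify that each $Y^{(k)}$ is non-empty and hence that the monotone-endpoint argument in Step~3 produces a non-degenerate (possibly thin) interval.
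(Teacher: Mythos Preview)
Your proof is correct and complete; the three steps (non-emptiness via induction on $x\in Y^{(k)}$, nestedness from the definition of intersection, and the monotone-endpoint limit argument) are exactly the standard Cantor-type reasoning. Note, however, that the paper does not actually prove this lemma: it is stated with a citation to \cite{MKC} and no proof is given, so there is no in-paper argument to compare against. Your write-up is essentially what one finds in that reference, and nothing needs to be changed.
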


%%%%%%%%%%%%%%%%%%%%%%%%%%%%%%%%%%%%%%%%%%%%%%%%%%%%%

\begin{lemma}[see \cite{MKC}]\label{f4}
Every nested sequence $ {X^{(k)}} $ converges and has the limit $ \bigcap\limits_{k = 0}^\infty  {{X^{(k)}}}. $ 
\end{lemma}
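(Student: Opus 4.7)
The plan is to reduce the statement about nested intervals to the classical monotone convergence theorem for real sequences applied to the endpoints. Writing $X^{(k)} = [\underline{x}^{(k)}, \overline{x}^{(k)}]$, the nesting hypothesis $X^{(k+1)} \subseteq X^{(k)}$ unpacks, via the containment criterion stated just before Definition~\ref{f1-1}, to the two real inequalities $\underline{x}^{(k)} \leq \underline{x}^{(k+1)}$ and $\overline{x}^{(k+1)} \leq \overline{x}^{(k)}$. Thus $\{\underline{x}^{(k)}\}$ is monotone non-decreasing and bounded above by $\overline{x}^{(1)}$, while $\{\overline{x}^{(k)}\}$ is monotone non-increasing and bounded below by $\underline{x}^{(1)}$.

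Next I would invoke the monotone convergence theorem on each endpoint sequence separately, obtaining real limits $\underline{x}^{\ast} = \lim_{k \to \infty} \underline{x}^{(k)}$ and $\overline{x}^{\ast} = \lim_{k \to \infty} \overline{x}^{(k)}$. Passing the inequality $\underline{x}^{(k)} \leq \overline{x}^{(k)}$ to the limit gives $\underline{x}^{\ast} \leq \overline{x}^{\ast}$, so $X^{\ast} := [\underline{x}^{\ast},\overline{x}^{\ast}]$ is a legitimate interval. Convergence $X^{(k)} \to X^{\ast}$ then follows, since any natural metric on $\mathbb{IR}$ (for instance the Hausdorff metric $d(X,Y)=\max\{|\underline{x}-\underline{y}|,|\overline{x}-\overline{y}|\}$) reduces to the convergence of the two endpoint sequences.

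It remains to identify the limit with the intersection $\bigcap_{k=0}^{\infty} X^{(k)}$. For the inclusion $X^{\ast} \subseteq \bigcap_{k} X^{(k)}$, I would take any $x \in X^{\ast}$ and observe that $\underline{x}^{(k)} \leq \underline{x}^{\ast} \leq x \leq \overline{x}^{\ast} \leq \overline{x}^{(k)}$ for each $k$, because the endpoint sequences are monotone. For the reverse inclusion, any $x$ in every $X^{(k)}$ satisfies $\underline{x}^{(k)} \leq x \leq \overline{x}^{(k)}$ for all $k$; letting $k \to \infty$ yields $\underline{x}^{\ast} \leq x \leq \overline{x}^{\ast}$, i.e.\ $x \in X^{\ast}$.

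I do not foresee a serious obstacle here; the whole argument is essentially bookkeeping on the two endpoint sequences. The only subtle point is that a notion of convergence for interval sequences has not been made explicit in the excerpt, so I would state up front which metric (or equivalently, which two-sided endpoint convergence) is meant, and verify that the class of nested sequences is automatically Cauchy in that sense, which is immediate once the endpoint limits have been exhibited.
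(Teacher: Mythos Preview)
Your argument is correct and is precisely the standard proof one finds in the interval-analysis literature: monotone endpoint sequences converge, the limit interval is well defined, and a two-sided squeeze identifies it with the intersection. There is nothing to compare against here, however, because the paper does not supply a proof of this lemma at all; it is merely quoted from the textbook of Moore, Kearfott and Cloud \cite{MKC} and used as a black box in the proof of Theorem~\ref{f6}. One cosmetic point: since the sequence is indexed from $k=0$, the bounds you cite should be $\overline{x}^{(0)}$ and $\underline{x}^{(0)}$ rather than $\overline{x}^{(1)}$ and $\underline{x}^{(1)}$.
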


%%%%%%%%%%%%%%%%%%%%%%%%%%%%%%%%%%%%%%%%%%%%%%%%%%%%%
Let $ f $ be a real-valued function of a real variable $ x, $ and suppose that $ f $ is continuously differentiable. Newton's method is one of the best iterative methods for solving nonlinear equations by using
\begin{equation}\label{1-1}
{x_{n+1}} = {x_n} - \dfrac{{f({x_n})}}{{f'({x_n})}},
\end{equation}
which converges quadratically (see \cite{O}).

Let $ F'( {X} ) $ be an inclusion monotonic interval extension of $ f'( {x} ) $ such that $ 0\notin F'(X) $. An interval version of Newton method has been developed for solving nonlinear equations in \cite{MKC} as follows:
\begin{equation}\label{1}
{X^{(k+1)}} = N({X^{(k)}}) \cap {X^{(k)}},\qquad k = 0, 1, 2, \cdots ,\\
\end{equation}
where
\[N({X^{(k)}}) = {m({X^{(k)}}) - \dfrac{{f(m({X^{(k)}}))}}{{F'({X^{(k)}})}}}. \]

%%%%%%%%%%%%%%%%%%%%%%%%%%%%%%%%%%%%%%%%%%%%%%%%%%%%%

\begin{theorem}[see \cite{MKC}]\label{f4-1}
If an interval $ {X^{(0)}} $ contains a zero $ x $ of $ f (x) $, then so does $ {X^{(k)}} $ for all $ k=0, 1, 2, \cdots $, defined by \eqref{1}. Furthermore, the intervals $ {X^{(k)}} $ form a nested sequence converging to $ x $ if $ 0 \notin F'( {{X^{(0)}}} ) $.
\end{theorem}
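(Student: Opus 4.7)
The plan is to split the theorem into three claims and dispatch them in order: (i) persistence of the root in every iterate, (ii) nestedness, and (iii) convergence of the limit interval to the single point $\{x\}$. Claims (i) and (ii) are short; claim (iii) is where the hypothesis $0\notin F'(X^{(0)})$ really does work, so I expect it to be the main obstacle.

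For (i), I would proceed by induction on $k$. The base case $x\in X^{(0)}$ is the hypothesis. For the inductive step, assume $x\in X^{(k)}$ and apply the mean value theorem to $f$ on the segment between $m(X^{(k)})$ and $x$: there exists $\xi$ lying between $m(X^{(k)})$ and $x$, hence $\xi\in X^{(k)}$, such that $f(x)-f(m(X^{(k)}))=f'(\xi)(x-m(X^{(k)}))$. Because $f(x)=0$ and $f'(\xi)\neq 0$ (since $f'(\xi)\in F'(X^{(k)})\subseteq F'(X^{(0)})$ by inclusion monotonicity, and $0\notin F'(X^{(0)})$), I can solve for $x$ to obtain $x=m(X^{(k)})-f(m(X^{(k)}))/f'(\xi)$. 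The point $f(m(X^{(k)}))/f'(\xi)$ belongs to the interval $f(m(X^{(k)}))/F'(X^{(k)})$ by the inclusion property of interval arithmetic, so $x\in N(X^{(k)})$. Combining with the inductive hypothesis gives $x\in N(X^{(k)})\cap X^{(k)}=X^{(k+1)}$. Claim (ii) is immediate from the definition: $X^{(k+1)}=N(X^{(k)})\cap X^{(k)}\subseteq X^{(k)}$, so by Lemma \ref{f4} the sequence converges to $X^{*}=\bigcap_{k\geq 0} X^{(k)}$, and by (i) together with Lemma \ref{f3} we have $x\in X^{*}$.

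The remaining task is to show $X^{*}=\{x\}$, equivalently $w(X^{(k)})\to 0$. Here I would use the hypothesis $0\notin F'(X^{(0)})$ to set $d:=\min\{|y|:y\in F'(X^{(0)})\}>0$; by inclusion monotonicity every $F'(X^{(k)})$ stays outside the disk of radius $d$ around $0$. Then from $X^{(k+1)}\subseteq N(X^{(k)})$ I estimate the width of $N(X^{(k)})$: using the identity $w(a/Y)=|a|\,w(Y)/(|\underline{y}|\,|\overline{y}|)$ for a real scalar $a$ and an interval $Y$ bounded away from $0$, and bounding $|f(m(X^{(k)}))|\leq (M/2)\,w(X^{(k)})$ by the mean value theorem (where $M=\max_{t\in X^{(0)}}|f'(t)|$, finite because $F'(X^{(0)})$ is bounded), I obtain $w(X^{(k+1)})\leq (M/(2d^{2}))\,w(F'(X^{(k)}))\,w(X^{(k)})$. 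Since $F'$ is an inclusion monotonic (indeed Lipschitz) interval extension, $w(F'(X^{(k)}))$ is uniformly bounded, and in fact shrinks with $w(X^{(k)})$, giving a quadratic-type contraction $w(X^{(k+1)})\leq C\,w(X^{(k)})^{2}$.

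The delicate point, and what I expect to be the main obstacle, is justifying why this estimate forces $w(X^{(k)})\to 0$ even when the initial width is not small enough for the quadratic bound to be an outright contraction. I would handle this by observing first that $\{w(X^{(k)})\}$ is monotone nonincreasing (from nestedness) and hence has a limit $w^{*}\geq 0$; if $w^{*}>0$, the uniform lower bound on $|F'(X^{(k)})|$ and the bound $|f(m(X^{(k)}))|\leq (M/2)w(X^{(k)})$ force $w(X^{(k+1)})$ to be strictly smaller than $w^{*}$ for all sufficiently large $k$, a contradiction. Hence $w^{*}=0$, $X^{*}$ reduces to a single point, and since $x\in X^{*}$ that point must be $x$, completing the proof.
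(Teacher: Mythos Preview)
The paper does not actually prove this theorem; it is quoted from Moore--Kearfott--Cloud \cite{MKC} without argument, so there is no in-paper proof to compare against. Your treatment of claims (i) and (ii) is the standard mean-value-theorem argument and is correct.

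Claim (iii), however, has a real gap. The quadratic bound $w(X^{(k+1)})\le C\,w(X^{(k)})^{2}$ is fine, but the contradiction you try to extract from it does not follow. If $w(X^{(k)})\downarrow w^{*}>0$, then passing to the limit in your inequality gives only $w^{*}\le C(w^{*})^{2}$, i.e.\ $Cw^{*}\ge 1$; this is perfectly consistent when $X^{(0)}$ is wide, so it is not a contradiction. Your sentence ``the uniform lower bound on $|F'(X^{(k)})|$ and the bound $|f(m(X^{(k)}))|\le (M/2)w(X^{(k)})$ force $w(X^{(k+1)})$ to be strictly smaller than $w^{*}$'' asserts precisely the conclusion you need without deriving it: those two bounds together yield at best $w(X^{(k+1)})\le C'\,w(X^{(k)})$ or $w(X^{(k+1)})\le C\,w(X^{(k)})^{2}$, neither of which forces $w(X^{(k+1)})<w^{*}$ unless the relevant constant times $w^{*}$ is below $1$.

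The classical fix, which is the argument in \cite{MKC}, bypasses the rate estimate entirely. If $f(m(X^{(k)}))\ne 0$, then because $0\notin F'(X^{(k)})$ the quotient interval $f(m(X^{(k)}))/F'(X^{(k)})$ does not contain $0$, hence $m(X^{(k)})\notin N(X^{(k)})$. Thus $N(X^{(k)})$ lies strictly on one side of the midpoint of $X^{(k)}$, and the intersection $X^{(k+1)}=N(X^{(k)})\cap X^{(k)}$ is contained in one half of $X^{(k)}$, giving $w(X^{(k+1)})\le\tfrac{1}{2}\,w(X^{(k)})$. (If $f(m(X^{(k)}))=0$ then $X^{(k+1)}=\{m(X^{(k)})\}=\{x\}$ and we are done immediately.) This halving forces $w(X^{(k)})\to 0$ for any initial $X^{(0)}$; your quadratic estimate then correctly describes the asymptotic speed, but it is not what drives global convergence.
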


%%%%%%%%%%%%%%%%%%%%%%%%%%%%%%%%%%%%%%%%%%%%%%%%%%%%%

\begin{theorem}[see \cite{MKC}]\label{f5}
Given a real rational function $ f $ of a single real variable $ x $ with rational extensions $ F,\, F' $ of $ f,\, f', $ respectively, such that $ f $ has a simple zero $ x^* $ in an interval $ {X^{(0)}} $ for which $ F( {{X^{(0)}}} ) $ is defined and $ F'( {{X^{(0)}}} ) $ is defined and does not contain zero i.e. $ 0 \notin F'( {{X^{(0)}}} ). $ Then there is a positive real number $ C $ such that
\[w( {{X^{( k + 1 )}}} ) \leq C\, (w( {{X^{( k )}}} ))^2.\]
\end{theorem}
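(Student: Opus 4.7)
The plan is to bound $w(X^{(k+1)})$ by the width of $N(X^{(k)})$ and then estimate that width as a product of two factors, each of which is $O(w(X^{(k)}))$. Rational interval extensions are known to be Lipschitz interval extensions on bounded domains (Moore), so I will tacitly use this for $F$ and $F'$.

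First, since $X^{(k+1)} = N(X^{(k)}) \cap X^{(k)} \subseteq N(X^{(k)})$, we have $w(X^{(k+1)}) \leq w(N(X^{(k)}))$. Because $m(X^{(k)})$ and $f(m(X^{(k)}))$ are thin (real) intervals, the width of $N(X^{(k)})$ equals $w\bigl(f(m(X^{(k)}))/F'(X^{(k)})\bigr) = |f(m(X^{(k)}))|\cdot w\bigl(1/F'(X^{(k)}))$.

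Second, I would control the denominator. By Theorem~\ref{f4-1} the iterates are nested, so $X^{(k)}\subseteq X^{(0)}$ and, by inclusion monotonicity of $F'$, $F'(X^{(k)})\subseteq F'(X^{(0)})$. The hypothesis $0\notin F'(X^{(0)})$ then yields a constant $d>0$ with $|y|\geq d$ for every $y\in F'(X^{(k)})$; the explicit formula $1/[\underline{y},\overline{y}]=[1/\overline{y},1/\underline{y}]$ gives $w(1/F'(X^{(k)}))\leq w(F'(X^{(k)}))/d^{2}$. Combining with the Lipschitz property of the rational extension $F'$ on $X^{(0)}$, say with constant $L'$, we get $w(1/F'(X^{(k)}))\leq (L'/d^{2})\,w(X^{(k)})$.

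Third, I would control the numerator by the mean value theorem. Since $f(x^*)=0$ and $x^*,m(X^{(k)})\in X^{(k)}$, there is $\xi\in X^{(k)}$ with $f(m(X^{(k)}))=f'(\xi)(m(X^{(k)})-x^*)$. The inclusion $f'(\xi)\in F'(X^{(k)})\subseteq F'(X^{(0)})$ gives $|f'(\xi)|\leq |F'(X^{(0)})|=:M$, and $|m(X^{(k)})-x^*|\leq \tfrac{1}{2}w(X^{(k)})$ because $x^*$ lies in $X^{(k)}$. Hence $|f(m(X^{(k)}))|\leq \tfrac{M}{2}w(X^{(k)})$.

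Putting the two bounds together yields $w(X^{(k+1)})\leq \frac{ML'}{2d^{2}}\,(w(X^{(k)}))^{2}$, so we may take $C=ML'/(2d^{2})$. The only non-routine step is invoking the Lipschitz property of the rational extension $F'$ on $X^{(0)}$; everything else is a direct computation using the interval arithmetic formulas from the introduction, the mean value theorem, and the inclusion monotonicity of $F'$.
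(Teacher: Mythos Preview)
The paper does not supply a proof of this theorem; it is quoted verbatim from \cite{MKC} with the tag ``[see \cite{MKC}]'' and no argument. So there is nothing in the paper to compare your proposal against. Your argument is precisely the standard proof one finds in Moore--Kearfott--Cloud: bound $w(X^{(k+1)})$ by $w(N(X^{(k)}))=|f(m(X^{(k)}))|\,w(1/F'(X^{(k)}))$, then use the mean value theorem together with $x^*\in X^{(k)}$ to get $|f(m(X^{(k)}))|\leq \tfrac{1}{2}|F'(X^{(0)})|\,w(X^{(k)})$, and use $0\notin F'(X^{(0)})$ plus the Lipschitz property of the rational extension $F'$ to get $w(1/F'(X^{(k)}))\leq (L'/d^{2})\,w(X^{(k)})$. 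All steps are correct, including the tacit facts that natural rational interval extensions are inclusion monotonic and Lipschitz on any box on which they are defined. Incidentally, the paper itself later relies on exactly these ingredients (the mean value estimate and a Lipschitz bound on $1/F'$, see \eqref{12-11}) inside the proof of Theorem~\ref{f7}, which confirms that your decomposition matches the intended toolkit.
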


%%%%%%%%%%%%%%%%%%%%%%%%%%%%%%%%%%%%%%%%%%%%%%%%%%%%%

\begin{remark}\label{f5-2}
Let $ k=0, 1, 2, \cdots $. The interval Newton method has the following properties:
\begin{itemize}
\item[$ (i) $] If $ N({X^{(k)}}) \cap {X^{(k)}}=\emptyset , $ then $ {X^{(k)}} $ does not contain any zero of $ f $.
\item[$ (ii) $] If $ x^* \in {X^{(0)}} $ and $ N({X^{(k)}}) \subseteq {X^{(k)}}, $ then $ {X^{(k)}} $ contains exactly one zero of $ f $.
\end{itemize}
\end{remark}

%%%%%%%%%%%%%%%%%%%%%%%%%%%%%%%%%%%%%%%%%%%%%%%%%%%%%

Recently, based on interval extension of the Newton method \eqref{1}, some interval methods have been produced for computing the enclosure solutions of nonlinear equations. In \cite{E1}, an interval extension of the King method have been produced as follows:
\begin{equation}\label{2-1}
\begin{cases}
{Y^{(k)}} = \left\{ {m ({X^{(k)}}) - \dfrac{{f(m ({X^{(k)}}))}}{{F'({X^{(k)}})}}} \right\} \cap {X^{(k)}},\\
{X^{(k+1)}} = \left\{ m ({Y^{(k)}}) - \left(\dfrac{f(m ({X^{(k)}})) +\beta f(m ({Y^{(k)}}))}{f(m ({X^{(k)}})) + (\beta -2) f(m ({Y^{(k)}}))}\right)\dfrac{f(m ({Y^{(k)}}))}{F'({X^{(k)}})} \right\} \cap {X^{(k)}},
\end{cases}
\end{equation}
where $ \beta\in \mathbb{R} $ is a constant. Interval extension of the Ostrowski method \cite{E2} is a member of this family when $ \beta = 0 $. In \cite{P}, an interval version of Traub's three-step method has been produced by Petkovi\'c, which is written as:
\begin{equation}\label{2-1-1}
\begin{cases}
\begin{array}{l}
{Y^{(k)}} = \left\{ {m({X^{(k)}}) - \dfrac{{f(m({X^{(k)}}))}}{{F'({X^{(k)}})}}} \right\} \cap {X^{(k)}},\\
{Z^{(k)}} = \left\{ {m({Y^{(k)}}) - \dfrac{{f(m({Y^{(k)}}))}}{{F'({X^{(k)}})}}} \right\} \cap {Y^{(k)}},\\
{X^{(k + 1)}} = \left\{ {m({Z^{(k)}}) - \dfrac{{f(m({Z^{(k)}}))}}{{F'({X^{(k)}})}}} \right\} \cap {Z^{(k)}}.
\end{array}
\end{cases}
\end{equation}
We here remind the three-step method, which was given by Kung and Traub \cite{KT} as comes next
\begin{equation}\label{2-2}
\begin{cases}
{y_k} = {x_k} - \dfrac{{f({x_k})}}{f'({x_k})}, \\
{z_k} = {y_k} - \dfrac{{f({x_k})f({y_k})}}{{{{(f({x_k}) - f({y_k}))}^2}}}\dfrac{{f({x_k})}}{{f'({x_k})}}, \\
{x_{k + 1}} = {z_k} - \dfrac{{f({x_k})f({y_k})f({z_k})\left( {f{{({x_k})}^2} + f({y_k})(f({y_k}) - f({z_k}))} \right)}}{{{{(f({x_k}) - f({y_k}))}^2}{{(f({x_k}) - f({z_k}))}^2}(f({y_k}) - f({z_k}))}}\dfrac{{f({x_k})}}{{f'({x_k})}}.
\end{cases}
\end{equation}
This method is an improvement of Newton method \eqref{1-1} with the order of convergence equal to 8.

Let $x_{0}\leq x_{1}\leq \ldots \leq x_{n}$ and $ f $ is sufficiently differentiable. Here we write the inverse interpolation polynomial of degree at most $ n $, in the form
\begin{equation}\label{3-0}
\begin{array}{l}
{Q_n}(t) = {x_0} + (t - f({x_0})) g[f({x_0}),f({x_1})] +  \cdots \\
\,~~~~~~~~~~~~~~ + (t - f({x_0})) \cdots (t - f({x_{n - 1}})) g[f({x_0}),f({x_1}), \ldots ,f({x_n})],
\end{array}
\end{equation}
where
\[g(f(x)) = x,\]
and its derivative is
\[g'(f(x)) = 1/f'(x). \]
The inverse differences for $ {x_0} \neq {x_n} $ are defined recursively as follows:
\begin{equation}\label{3-1}
\begin{array}{l}
g[f({x_0}),f({x_1})] = \dfrac{{g(f({x_1})) - g(f({x_0}))}}{{f({x_1}) - f({x_0})}} = \dfrac{{{x_1} - {x_0}}}{{f({x_1}) - f({x_0})}},\\
\vdots \\
g[f({x_0}),f({x_1}), \ldots ,f({x_n})] = \dfrac{{g[f({x_1}), f({x_2}), \ldots ,f({x_n})] - g[f({x_0}), f({x_1}), \ldots ,f({x_{n - 1}})]}}{{f({x_n}) - f({x_0})}},
\end{array}
\end{equation}
if $ {x_0} = {x_n} $, then
\[ g[f({x_0}),f({x_1}), \ldots ,f({x_n})]= \dfrac{{{g^{(n)}}(f({x_0}))}}{{n!}}. \]
Putting $ t = 0 $ into \eqref{3-0} we get $ {x^*}\approx {Q_n}(0) $, where $ {x^*} $ is a root of $ f $.

In the present article, using the interval extension of the Newton method and applying the inverse interpolation polynomial, interval extension of the three-step Kung and Traub's method is produced for finding the root enclosures of nonlinear equations. Convergence rate of the proposed method is also examined. Moreover, error bound and comparison of this method with the already present methods are given.

%%%%%%%%%%%%%%%%%%%%%%%%%%%%%%%%%%%%%%%%%%%%%%%%%%%%%

\section{Interval three-step Kung and Traub's method}\label{sec:2}

%%%%%%%%%%%%%%%%%%%%%%%%%%%%%%%%%%%%%%%%%%%%%%%%%%%%%

Let $ {Y^{(0)}} = [ {y_1^{(0)} , y_2^{(0)}} ] $ be an interval such that $ {x^*}\approx m({X^{(0)}}) $ and $ {x^*}\approx m({Y^{(0)}}) $. From \eqref{1} we have
\begin{equation}\label{2}
{Y^{(0)}} = N({X^{(0)}}) \cap {X^{(0)}},
\end{equation}
where
\begin{equation}\label{2-0}
N({X^{(0)}}) = {m ({X^{(0)}}) - \dfrac{{f(m ({X^{(0)}}))}}{{F'({X^{(0)}})}}}.
\end{equation}
Since $ m ({Y^{(0)}}) \in {Y^{(0)}} $ and ${Y^{(0)}} \subseteq N({X^{(0)}}),$ it is clear that $ m ({Y^{(0)}}) \in N({X^{(0)}}). $ Suppose 
\[ m({Y^{(0)}}) = {m({X^{(0)}}) - \dfrac{{f(m({X^{(0)}}))}}{{f'(\alpha)}}}, \]
where $ f'(\alpha)\in F'({X^{(0)}}) $. Let $ \alpha $ is sufficiently close to $ m({X^{(0)}}) $. Since $ f $ is a smooth function, we can assume that $f'(\alpha) \approx f'(m({X^{(0)}})). $ Therefore
\begin{equation}\label{2-3}
m({Y^{(0)}}) \approx {m({X^{(0)}}) - \dfrac{{f(m({X^{(0)}}))}}{{f'(m({X^{(0)}}))}}}.
\end{equation}
We assume that near the root $ x^* $, the function $ f $ is monotone, so that $ f $ has an inverse $ g $. Since we have three values $ {f(m({X^{(0)}}))}, {f'(m({X^{(0)}}))} $ and $ {f(m({Y^{(0)}}))} $, it is convenient to approximate $ x^* $ by the inverse Hermite interpolation polynomial of degree 2. Hence, we can start a table of divided differences for the inverse function $ g $:
\begin{center} 
\resizebox{\textwidth}{!}{
\begin{tabular}{c|c|cc}
 $ f $ & $ g $ &  & \\
\hline
    $ f(m({X^{(0)}})) $ & $ m({X^{(0)}}) $ & \\
    
    $ f(m({X^{(0)}})) $ & $ m({X^{(0)}}) $ & $ g[f(m({X^{(0)}})), f(m({X^{(0)}}))] $ & \\
    
    $ f(m({Y^{(0)}})) $ & $ m({Y^{(0)}}) $ & $ g[f(m({X^{(0)}})), f(m({Y^{(0)}}))] $ & $ g[f(m({X^{(0)}})), f(m({X^{(0)}})), f(m({Y^{(0)}}))] $

\end{tabular}}
\end{center}
Let $ {{Z^{(0)}} = [z_1^{(0)},z_2^{(0)}] \subseteq {Y^{(0)}}} $ such that $ {x^*}\approx m({Z^{(0)}}) $. Wanting to compute $ {x^*} $, we can get an improved approximation by quadratic interpolation,
\begin{equation}\label{3-4}
\begin{scriptsize}
\begin{array}{l}
m({Z^{(0)}}) \approx m({X^{(0)}}) + (0 - f(m({X^{(0)}})))\, g[f(m({X^{(0)}})),f(m({X^{(0)}}))] \\
~~~~~~~~~~~~~~~~~~~~~~~~~~ + {(0 - f(m({X^{(0)}})))^2}\, g[f(m({X^{(0)}})),f(m({X^{(0)}})),f(m({Y^{(0)}}))]\\
~~~~~~~~~~~ = m({X^{(0)}}) - f(m({X^{(0)}}))\, g[f(m({X^{(0)}})),f(m({X^{(0)}}))]+ {f^2}(m({X^{(0)}}))\, g[f(m({X^{(0)}})),f(m({X^{(0)}})),f(m({Y^{(0)}}))],
\end{array}
\end{scriptsize}
\end{equation}
where
\begin{equation}\label{4-3}
g[f(m({X^{(0)}})),f(m({X^{(0)}}))] = g'(f(m({X^{(0)}}))) = \dfrac{1}{{f'(m({X^{(0)}}))}},
\end{equation}
and from \eqref{2-3}, we have
\begin{equation}\label{4-4}
\begin{scriptsize}
\begin{array}{l}
g[m({X^{(0)}}),m({X^{(0)}}),m({Y^{(0)}})] = \dfrac{{g[f(m({X^{(0)}})),f(m({Y^{(0)}}))] - g[f(m({X^{(0)}})),f(m({X^{(0)}}))]}}{{f(m({Y^{(0)}})) - f(m({X^{(0)}}))}}\\
~~~~~~~~~~~~~~~~~~~~~~~~~~~~~~~~~~~~~~~ = \dfrac{{\left( {(m({Y^{(0)}}) - m({X^{(0)}}))/(f(m({Y^{(0)}})) - f(m({X^{(0)}})))} \right) - \left( {1/f'(m({X^{(0)}}))} \right)}}{{f(m({Y^{(0)}})) - f(m({X^{(0)}}))}}\\
~~~~~~~~~~~~~~~~~~~~~~~~~~~~~~~~~~~~~~~ \approx \dfrac{{ - \left( {f(m({X^{(0)}}))/(f(m({Y^{(0)}})) - f(m({X^{(0)}})))f'(m({X^{(0)}}))} \right) - \left( {1/f'(m({X^{(0)}}))} \right)}}{{f(m({Y^{(0)}})) - f(m({X^{(0)}}))}}\\
~~~~~~~~~~~~~~~~~~~~~~~~~~~~~~~~~~~~~~~ = \dfrac{{ - f(m({Y^{(0)}}))}}{{{{(f(m({X^{(0)}})) - f(m({Y^{(0)}})))}^2}f'(m({X^{(0)}}))}}.
\end{array}
\end{scriptsize}
\end{equation}
Substituting \eqref{4-3} and \eqref{4-4} in \eqref{3-4}, we get
\begin{equation}\label{4-5}
\begin{array}{l}
m({Z^{(0)}}) \approx m({X^{(0)}}) - \dfrac{{f(m({X^{(0)}}))}}{{f'(m({X^{(0)}}))}} - \dfrac{{{f^2}(m({X^{(0)}}))f(m({Y^{(0)}}))}}{{{{(f(m({X^{(0)}})) - f(m({Y^{(0)}})))}^2}f'(m({X^{(0)}}))}}\\
~~~~~~~~~~\approx m({Y^{(0)}}) - \dfrac{{f(m({X^{(0)}}))f(m({Y^{(0)}}))}}{{{{(f(m({X^{(0)}})) - f(m({Y^{(0)}})))}^2}}}\dfrac{{f(m({X^{(0)}}))}}{{f'(m({X^{(0)}}))}}.
\end{array}
\end{equation}
Note that $ f'(m({X^{(0)}}))\in F'({X^{(0)}}) $, so the above formula yields
\[m({Z^{(0)}})\in \left\{ {m({Y^{(0)}}) - \dfrac{{f(m({X^{(0)}}))f(m({Y^{(0)}}))}}{{{{(f(m({X^{(0)}})) - f(m({Y^{(0)}})))}^2}}}\dfrac{{f(m({X^{(0)}}))}}{{F'({X^{(0)}})}}} \right\}.\]
Let
\begin{equation}\label{4-6}
K({X^{(0)}}, {Y^{(0)}}) = {m({Y^{(0)}}) - \dfrac{{f(m({X^{(0)}}))f(m({Y^{(0)}}))}}{{{{(f(m({X^{(0)}})) - f(m({Y^{(0)}})))}^2}}}\dfrac{{f(m({X^{(0)}}))}}{{F'({X^{(0)}})}}},
\end{equation}
therefore $ m({Z^{(0)}})\in K({X^{(0)}}, {Y^{(0)}}) $. Since $ m({Z^{(0)}}) \in Z^{(0)} \subseteq Y^{(0)} $, we have
\[\begin{array}{l}
m({Z^{(0)}}) \in K({X^{(0)}}, {Y^{(0)}}) \cap Y^{(0)}.
\end{array}\]
Define
\begin{equation}\label{4-7}
{Z^{(0)}} = K({X^{(0)}}, {Y^{(0)}}) \cap Y^{(0)}.
\end{equation}
Now we have $ m({Z^{(0)}}) = g(f(m({Z^{(0)}}))) $. Hence, the table of divided differences can be updated and becomes
\begin{center} 
\resizebox{\textwidth}{!}{
\begin{tabular}{c|c|ccc}
 $ f $ & $ g $ &  &  & \\
\hline
    $ f(m({X^{(0)}})) $ & $ m({X^{(0)}}) $ &  & \\
    
    $ f(m({X^{(0)}})) $ & $ m({X^{(0)}}) $ & $ g[f(m({X^{(0)}})), f(m({X^{(0)}}))] $ &  & \\
    
    $ f(m({Y^{(0)}})) $ & $ m({Y^{(0)}}) $ & $ g[f(m({X^{(0)}})), f(m({Y^{(0)}}))] $ & $ g[f(m({X^{(0)}})), f(m({X^{(0)}})), f(m({Y^{(0)}}))] $  &  \\
    
    $ f(m({Z^{(0)}})) $ & $ m({Z^{(0)}}) $ & $ g[f(m({Y^{(0)}})), f(m({Z^{(0)}}))] $ & $ g[f(m({X^{(0)}})), f(m({Y^{(0)}})), f(m({Z^{(0)}}))] $  & $ g[f(m({X^{(0)}})), f(m({X^{(0)}})), f(m({Y^{(0)}})), f(m({Z^{(0)}}))] $

\end{tabular}}
\end{center}
Let $ {{X^{(1)}} \subseteq {Z^{(0)}}} $ such that $ {x^*} \approx m({X^{(1)}}) $. This allows us to use cubic interpolation polynomial to get, again with  inverse interpolation polynomial,
\begin{equation}\label{4-10}
\begin{scriptsize}
\begin{array}{l}
m({X^{(1)}}) \approx m({X^{(0)}}) + (0 - f(m({X^{(0)}})))g[f(m({X^{(0)}})), f(m({X^{(0)}}))]\\
~~~~~~~~~~~~~~~~~~~~~~~~~~\, + {(0 - f(m({X^{(0)}})))^2}\, g[f(m({X^{(0)}})), f(m({X^{(0)}})), f(m ({Y^{(0)}}))]\\
~~~~~~~~~~~~~~~~~~~~~~~~~~\, + {(0 - f(m({X^{(0)}})))^2}(0 - f(m({Y^{(0)}})))\, g[f(m({X^{(0)}})), f(m({X^{(0)}})), f(m ({Y^{(0)}})), f(m ({Z^{(0)}}))]\\
~~~~~~~~~~~\, \approx m ({Z^{(0)}}) - {f^2}(m({X^{(0)}}))f(m({Y^{(0)}}))g[f(m({X^{(0)}})),f(m({X^{(0)}})),f(m({Y^{(0)}})),f(m({Z^{(0)}}))],
\end{array}
\end{scriptsize}
\end{equation}
to compute $ g[f(m({X^{(0)}})),f(m({X^{(0)}})),f(m({Y^{(0)}})),f(m({Z^{(0)}}))] $, first note that from \eqref{2-3} and \eqref{4-5}, we have
\begin{equation}\label{4-11}
\begin{small}
\begin{array}{l}
g[f(m({X^{(0)}})),f(m({Y^{(0)}})),f(m({Z^{(0)}}))]\\
 = \dfrac{{g[f(m({Y^{(0)}})),f(m({Z^{(0)}}))] - g[f(m({X^{(0)}})),f(m({Y^{(0)}}))]}}{{f(m({Z^{(0)}})) - f(m({X^{(0)}}))}}\\
 = \dfrac{{\left( {(m({Z^{(0)}}) - m({Y^{(0)}}))/(f(m({Z^{(0)}})) - f(m({Y^{(0)}})))} \right) - \left( {(m({Y^{(0)}}) - m({X^{(0)}}))/(f(m({Y^{(0)}})) - f(m({X^{(0)}})))} \right)}}{{f(m({Z^{(0)}})) - f(m({X^{(0)}}))}}\\
 = \dfrac{{(m({Z^{(0)}}) - m({Y^{(0)}}))(f(m({Y^{(0)}})) - f(m({X^{(0)}}))) - (m({Y^{(0)}}) - m({X^{(0)}}))(f(m({Z^{(0)}})) - f(m({Y^{(0)}})))}}{{(f(m({Z^{(0)}})) - f(m({X^{(0)}})))(f(m({Y^{(0)}})) - f(m({X^{(0)}})))(f(m({Z^{(0)}})) - f(m({Y^{(0)}})))}}\\
 \approx \dfrac{{ - \left( {f(m({X^{(0)}}))f(m({Y^{(0)}}))/(f(m({Y^{(0)}})) - f(m({X^{(0)}})))} \right) + \left( {f(m({Z^{(0)}})) - f(m({Y^{(0)}}))} \right)}}{{(f(m({Z^{(0)}})) - f(m({X^{(0)}})))(f(m({Y^{(0)}})) - f(m({X^{(0)}})))(f(m({Z^{(0)}})) - f(m({Y^{(0)}})))}}\dfrac{{f(m({X^{(0)}}))}}{{f'(m({X^{(0)}}))}}\\
 = \dfrac{{ - f(m({X^{(0)}}))f(m({Y^{(0)}})) + (f(m({Z^{(0)}})) - f(m({Y^{(0)}})))(f(m({Y^{(0)}})) - f(m({X^{(0)}})))}}{{(f(m({Z^{(0)}})) - f(m({X^{(0)}}))){{(f(m({X^{(0)}})) - f(m({Y^{(0)}})))}^2}(f(m({Z^{(0)}})) - f(m({Y^{(0)}})))}}\dfrac{{f(m({X^{(0)}}))}}{{f'(m({X^{(0)}}))}}\\
 = \dfrac{{f(m({X^{(0)}}))f(m({Y^{(0)}})) - (f(m({Z^{(0)}})) - f(m({Y^{(0)}})))(f(m({Y^{(0)}})) - f(m({X^{(0)}})))}}{{(f(m({Z^{(0)}})) - f(m({X^{(0)}}))){{(f(m({X^{(0)}})) - f(m({Y^{(0)}})))}^2}(f(m({Y^{(0)}})) - f(m({Z^{(0)}})))}}\dfrac{{f(m({X^{(0)}}))}}{{f'(m({X^{(0)}}))}}.
\end{array}
\end{small}
\end{equation}
Now from \eqref{4-4} and \eqref{4-11}, we obtain
\begin{equation}\label{4-12}
\begin{small}
\begin{array}{l}
g[m({X^{(0)}}),m({X^{(0)}}),m({Y^{(0)}}),m({Z^{(0)}})]\\
 = \dfrac{{g[f(m({X^{(0)}})),f(m({Y^{(0)}})),f(m({Z^{(0)}}))] - g[f(m({X^{(0)}})),f(m({X^{(0)}})),f(m({Y^{(0)}}))]}}{{f(m({Z^{(0)}})) - f(m({X^{(0)}}))}}\\
 \approx \left( {\dfrac{{f(m({X^{(0)}}))f(m({Y^{(0)}})) - (f(m({Y^{(0)}})) - f(m({Z^{(0)}})))(f(m({X^{(0)}})) - f(m({Y^{(0)}})))}}{{(f(m({Z^{(0)}})) - f(m({X^{(0)}}))){{(f(m({X^{(0)}})) - f(m({Y^{(0)}})))}^2}(f(m({Y^{(0)}})) - f(m({Z^{(0)}})))}}\dfrac{{f(m({X^{(0)}}))}}{{f'(m({X^{(0)}}))}}} \right.\\
~~\left. { + \dfrac{{f(m({Y^{(0)}}))}}{{{{(f(m({X^{(0)}})) - f(m({Y^{(0)}})))}^2}}}\dfrac{1}{{f'(m({X^{(0)}}))}}} \right)/(f(m({Z^{(0)}})) - f(m({X^{(0)}})))\\
 = \left( {\left( {f(m({X^{(0)}}))f(m({Y^{(0)}})) - (f(m({Y^{(0)}})) - f(m({Z^{(0)}})))(f(m({X^{(0)}})) - f(m({Y^{(0)}})))} \right)f(m({X^{(0)}}))} \right.\\
~~\left. { + f(m({Y^{(0)}}))(f(m({Z^{(0)}})) - f(m({X^{(0)}})))(f(m({Y^{(0)}})) - f(m({Z^{(0)}})))} \right)\\
~~~/{(f(m({Z^{(0)}})) - f(m({X^{(0)}})))^2}{(f(m({X^{(0)}})) - f(m({Y^{(0)}})))^2}(f(m({Y^{(0)}})) - f(m({Z^{(0)}})))f'(m({X^{(0)}}))\\
 = \dfrac{{\left( {{f^2}(m({X^{(0)}})) + f(m({Y^{(0)}}))(f(m({Y^{(0)}})) - f(m({Z^{(0)}})))} \right)f(m({Z^{(0)}}))}}{{{{(f(m({X^{(0)}})) - f(m({Z^{(0)}})))}^2}{{(f(m({X^{(0)}})) - f(m({Y^{(0)}})))}^2}(f(m({Y^{(0)}})) - f(m({Z^{(0)}})))f'(m({X^{(0)}}))}}.
\end{array}
\end{small}
\end{equation}
Substituting \eqref{4-12} into \eqref{4-10} gives
\begin{scriptsize}
\[\begin{array}{l}
m({X^{(1)}}) \approx m({Z^{(0)}}) - \frac{{f(m({X^{(0)}}))f(m({Y^{(0)}}))f(m({Z^{(0)}}))\left( {{f^2}(m({X^{(0)}})) + f(m({Y^{(0)}}))(f(m({Y^{(0)}})) - f(m({Z^{(0)}})))} \right)}}{{{{(f(m({X^{(0)}})) - f(m({Z^{(0)}})))}^2}{{(f(m({X^{(0)}})) - f(m({Y^{(0)}})))}^2}(f(m({Y^{(0)}})) - f(m({Z^{(0)}})))}}\frac{{f(m({X^{(0)}}))}}{{f'(m({X^{(0)}}))}}.
\end{array}\]
\end{scriptsize}
It follows that
\begin{scriptsize}
\[\begin{array}{l}
m({X^{(1)}}) \in \left\{ {m({Z^{(0)}}) - \frac{{f(m({X^{(0)}}))f(m({Y^{(0)}}))f(m({Z^{(0)}}))\left( {{f^2}(m({X^{(0)}})) + f(m({Y^{(0)}}))(f(m({Y^{(0)}})) - f(m({Z^{(0)}})))} \right)}}{{{{(f(m({X^{(0)}})) - f(m({Z^{(0)}})))}^2}{{(f(m({X^{(0)}})) - f(m({Y^{(0)}})))}^2}(f(m({Y^{(0)}})) - f(m({Z^{(0)}})))}}\frac{{f(m({X^{(0)}}))}}{{F'({X^{(0)}})}}} \right\}.
\end{array}\]
\end{scriptsize}
Let
\begin{equation}\label{4-8}
\begin{scriptsize}
\begin{array}{l}
T({X^{(0)}},{Y^{(0)}},{Z^{(0)}}) = {m({Z^{(0)}}) - \frac{{f(m({X^{(0)}}))f(m({Y^{(0)}}))f(m({Z^{(0)}}))\left( {{f^2}(m({X^{(0)}})) + f(m({Y^{(0)}}))(f(m({Y^{(0)}})) - f(m({Z^{(0)}})))} \right)}}{{{{(f(m({X^{(0)}})) - f(m({Z^{(0)}})))}^2}{{(f(m({X^{(0)}})) - f(m({Y^{(0)}})))}^2}(f(m({Y^{(0)}})) - f(m({Z^{(0)}})))}}\frac{{f(m({X^{(0)}}))}}{{F'({X^{(0)}})}}},
\end{array}
\end{scriptsize}
\end{equation}
therefore $ m({X^{(1)}}) \in T({X^{(0)}}, {Y^{(0)}}, {Z^{(0)}}) $. Since $ m({X^{(1)}}) \in {X^{(1)}} \subseteq Z^{(0)} $, we have
\[\begin{array}{l}
m({X^{(1)}})\in T({X^{(0)}},{Y^{(0)}},{Z^{(0)}}) \cap Z^{(0)}.
\end{array}\]
Define
\begin{equation}\label{4-9}
{X^{(1)}} = T({X^{(0)}},{Y^{(0)}},{Z^{(0)}}) \cap Z^{(0)}.
\end{equation}
Now by continuing this process, we see that
\begin{equation}\label{5}
\begin{cases}
{Y^{(k)}} = N({X^{(k)}}) \cap {X^{(k)}},\\
{Z^{(k)}} = K({X^{(k)}}, {Y^{(k)}}) \cap {Y^{(k)}},\\
{X^{(k + 1)}} = T({X^{(k)}}, {Y^{(k)}}, {Z^{(k)}}) \cap {Z^{(k)}},
\end{cases}
\end{equation}
where
\begin{equation}\label{5-1}
N({X^{(k)}}) = {m({X^{(k)}}) - \dfrac{{f(m({X^{(k)}}))}}{{F'({X^{(k)}})}}},
\end{equation}
\begin{equation}\label{5-2}
K({X^{(k)}}, {Y^{(k)}}) = {m({Y^{(k)}}) - \dfrac{{f(m({X^{(k)}}))f(m({Y^{(k)}}))}}{{{{(f(m({X^{(k)}})) - f(m({Y^{(k)}})))}^2}}}\dfrac{{f(m({X^{(k)}}))}}{{F'({X^{(k)}})}}},
\end{equation}
and
\begin{equation}\label{5-3}
\begin{small}
\begin{array}{l}
T({X^{(k)}}, {Y^{(k)}}, {Z^{(k)}}) = {m({Z^{(k)}}) - \left( {\left( {f(m({X^{(k)}}))f(m({Y^{(k)}}))f(m({Z^{(k)}}))\left( {f{{(m({X^{(k)}}))}^2}} \right.} \right.} \right.}\\
~~~~~~~~~~~~~~~~~~~~~~~~\left. {\left. { + f(m({Y^{(k)}}))\left( {f(m({Y^{(k)}})) - f(m({Z^{(k)}}))} \right)} \right)} \right)/\left( {\left( {f(m({X^{(k)}}))} \right.} \right.\\
~~~~~~~~~~~~~~~~~~~~~~~~~{\left. { - f(m({Y^{(k)}}))} \right)^2}{\left( {f(m({X^{(k)}})) - f(m({Z^{(k)}}))} \right)^2}\left( {f(m({Y^{(k)}}))} \right.\\
~~~~~~~~~~~~~~~~~~~~~~~~ {\left. {\left. {\left. { - f(m({Z^{(k)}}))} \right)} \right)} \right)\left( {f(m({X^{(k)}}))/F'({X^{(k)}})} \right)}.
\end{array}
\end{small}
\end{equation}
Thus, an interval extension of the three-step Kung and Traub's iterative Method is produced.

%%%%%%%%%%%%%%%%%%%%%%%%%%%%%%%%%%%%%%%%%%%%%%%%%%%%%

\begin{theorem}\label{f6}
Let $ f\in C({X^{(0)}}) $ and $ 0 \notin F'( {{X^{(k)}}} ) $ for $ k=0, 1, 2, \cdots . $ If an interval $ {X^{(0)}} $ contains a root $ x^* $ of $ f, $ then so do intervals $ {X^{(k)}}, k = 1, 2, \cdots . $ Besides, the nested interval sequence $ \{X^{(k)}\} $ of the form \eqref{5} converging to $ {x^*} $.
\end{theorem}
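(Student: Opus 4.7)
The plan is to prove the theorem by induction on $k$, establishing in parallel the enclosure $x^* \in X^{(k)}$, the nestedness $X^{(k+1)} \subseteq X^{(k)}$, and convergence of the sequence to $x^*$. Nestedness is immediate from the intersection structure in \eqref{5}: each of $Y^{(k)}$, $Z^{(k)}$, and $X^{(k+1)}$ is defined as the intersection of an operator image with the previous member of the chain, giving $X^{(k+1)} \subseteq Z^{(k)} \subseteq Y^{(k)} \subseteq X^{(k)}$, so $\{X^{(k)}\}$ is nested and by Lemma \ref{f4} converges to $\bigcap_{k=0}^{\infty} X^{(k)}$.

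For the enclosure, I would proceed inductively. Assuming $x^* \in X^{(k)}$, Theorem \ref{f4-1} applied to the first substep yields $x^* \in N(X^{(k)}) \cap X^{(k)} = Y^{(k)}$. For the remaining two containments I would invoke the mean-value theorem in the interval-Newton spirit: writing
\[x^* = m(Y^{(k)}) - \frac{f(m(Y^{(k)}))}{f'(\xi_1)}, \qquad x^* = m(Z^{(k)}) - \frac{f(m(Z^{(k)}))}{f'(\xi_2)},\]
for some $\xi_1,\xi_2 \in X^{(k)}$, and then recognizing the rational multipliers appearing in \eqref{5-2} and \eqref{5-3} as interval-arithmetic rearrangements that, when $F'(X^{(k)})$ is substituted for $f'(\xi_i)$, still bracket the Newton quotient. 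This would give $x^* \in K(X^{(k)}, Y^{(k)})$ and $x^* \in T(X^{(k)}, Y^{(k)}, Z^{(k)})$, hence $x^* \in Z^{(k)}$ and $x^* \in X^{(k+1)}$ after intersecting with the previous iterates.

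The main obstacle is precisely this last identification. The derivations of $K$ and $T$ in Section~\ref{sec:2} used inverse Hermite interpolation with several approximation steps (for instance \eqref{2-3} and \eqref{4-4}), so enclosure of $x^*$ is not literally inherited from the construction: one must verify directly that the interval evaluations \eqref{5-2}--\eqref{5-3}, with $F'(X^{(k)})$ in the denominator, contain the exact Newton-style quotients whose values equal $x^*$. Once that is done, convergence follows from Lemma \ref{f3} combined with a width estimate inherited from the first substep: since $X^{(k+1)} \subseteq Y^{(k)} \subseteq N(X^{(k)}) \cap X^{(k)}$, Theorem \ref{f5} gives $w(X^{(k+1)}) \leq C\, w(X^{(k)})^2$, forcing $w(X^{(k)}) \to 0$, so the nested intersection $\bigcap_{k} X^{(k)}$ collapses to the single point $\{x^*\}$.
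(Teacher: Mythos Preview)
Your outline coincides with the paper's: establish $x^*\in X^{(k)}$ by induction on $k$, observe nestedness from the intersection structure in \eqref{5}, and then invoke Lemmas~\ref{f3} and~\ref{f4} for the limit. The paper's argument is in fact terser than yours; it writes only ``By induction, since $0\notin F'(X^{(k)})$, if $x^*\in X^{(0)}$ then $x^*\in X^{(k)}$'' and immediately appeals to the two lemmas.

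You go beyond the paper in two respects. First, you correctly isolate the genuine difficulty: the containments $x^*\in K(X^{(k)},Y^{(k)})$ and $x^*\in T(X^{(k)},Y^{(k)},Z^{(k)})$ are not automatic, because the derivations of \eqref{5-2}--\eqref{5-3} in Section~\ref{sec:2} rest on approximation steps (e.g.\ \eqref{2-3}, \eqref{4-4}) rather than identities. The paper's proof does not address this either---no justification for the $K$ and $T$ substeps is offered beyond the word ``induction''. Your proposed mean-value route is the natural idea, but note that the scalar prefactor $\bigl(1-f(m(Y^{(k)}))/f(m(X^{(k)}))\bigr)^{-2}$ in $K$ is generally different from~$1$, so the desired inclusion of $1/f'(\xi_1)$ in that prefactor times $1/F'(X^{(k)})$ is not immediate and would still need a separate argument.

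Second, your width argument via Theorem~\ref{f5}---using $X^{(k+1)}\subseteq Y^{(k)}=N(X^{(k)})\cap X^{(k)}$ to obtain $w(X^{(k+1)})\le C\,w(X^{(k)})^{2}$---is a real addition. The paper cites only Lemmas~\ref{f3} and~\ref{f4}, which give $x^*\in\bigcap_k X^{(k)}$ but do not by themselves force that intersection to collapse to the singleton $\{x^*\}$; your extra step supplies exactly the missing width decay.
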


\begin{proof}
By induction, since $ 0\notin F'( {{X^{(k)}}} ), $ if $ {x^*}\in {X^{(0)}} $ then $ {x^*}\in {X^{(k)}} $ for $ k = 1, 2, \cdots $. Also, according to Lemma \ref{f3} and Lemma \ref{f4}, since $ \{X^{(k)}\} $ is nested interval sequence of the form \eqref{5}, and $ {x^*}\in {X^{(k)}} $ for $ k = 0, 1, 2, \cdots, $ therefore $ {x^*}\in \bigcap\limits_k {{X^{(k)}}} $ or $ \mathop {\lim }\limits_{n \to \infty } \bigcap\limits_{k = 0}^n {{X^{(k)}}}  = {x^*} $ and the proof is completed.
\end{proof}

%%%%%%%%%%%%%%%%%%%%%%%%%%%%%%%%%%%%%%%%%%%%%%%%%%%%%

\begin{theorem}\label{f7}
Let $ f\in C ({X^{(0)}}) $ and $ 0 \notin F'( {{X^{(k)}}} ) $ for $ k=0, 1, 2, \cdots . $
\begin{itemize}
\item[(i)] The iteration \eqref{5} stops after finitely many steps with empty $ {X^{(k)}} = \emptyset $ if and only if $ f $ has no zero in $ {X^{(0)}} $.
\item[(ii)] If $ K({X^{(k)}}, {Y^{(k)}}) \subset {Y^{(k)}} $ and $ T({X^{(k)}}, {Y^{(k)}}, {Z^{(k)}}) \subset {Z^{(k)}} $, then $ {X^{(k)}} $ contains exactly one root of $ f $.\\
In this case,
\begin{equation}\label{6}
w( {{X^{( k + 1 )}}} ) \leq \gamma \, ( w( {{X^{( k )}}} ))^4 .
\end{equation}
\end{itemize}
\end{theorem}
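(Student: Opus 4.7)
The ``only if'' direction follows immediately from Theorem \ref{f6}: if $X^{(k_0)} = \emptyset$ for some $k_0$, then no zero of $f$ can lie in $X^{(0)}$, since otherwise that zero would belong to every subsequent $X^{(k)}$ by Theorem \ref{f6}. For the converse, I would argue by contradiction. Assume all $X^{(k)}$ are nonempty; then \eqref{5} produces a nested sequence whose limit $X^{(\infty)} = \bigcap_{k} X^{(k)}$ is nonempty by Lemma \ref{f4}. Pick any $z \in X^{(\infty)}$: since $z \in N(X^{(k)})$ and $m(X^{(k)}) \to z$ as $w(X^{(k)}) \to 0$, passing to the limit in $N(X^{(k)}) = m(X^{(k)}) - f(m(X^{(k)}))/F'(X^{(k)})$ and using continuity of $f$ forces $f(z) = 0$, contradicting the assumption.

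\textbf{Uniqueness in (ii).} Under $K(X^{(k)}, Y^{(k)}) \subset Y^{(k)}$ and $T(X^{(k)}, Y^{(k)}, Z^{(k)}) \subset Z^{(k)}$, the intersections in \eqref{5} are redundant and each substep acts as a contractive interval map. I would adapt the argument behind Remark \ref{f5-2}(ii): if $x^*$ and $y^*$ were two zeros in $X^{(k)}$, the mean value theorem would realise each as $m(X^{(k)}) - f(m(X^{(k)}))/f'(\xi_i)$ with $f'(\xi_i) \in F'(X^{(k)})$, so both lie in $N(X^{(k)})$; the strict inclusions propagated through the three substeps then force $x^* = y^*$.

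\textbf{Order-four estimate.} For \eqref{6} the plan is to compose three width bounds. Theorem \ref{f5} applied to the Newton substep gives $w(Y^{(k)}) \leq C_1 w(X^{(k)})^2$. Since $m(Y^{(k)}), x^* \in Y^{(k)}$, Lipschitz continuity of $f$ yields $|f(m(Y^{(k)}))| = O(w(X^{(k)})^2)$; similarly $|f(m(Z^{(k)}))|$ will turn out to be $O(w(X^{(k)})^3)$. Inspecting \eqref{5-2}, the scalar prefactor $f(m(X^{(k)}))f(m(Y^{(k)}))/(f(m(X^{(k)})) - f(m(Y^{(k)})))^{2}$ is $O(w(X^{(k)}))$ (numerator $O(w(X^{(k)})^3)$, denominator $O(w(X^{(k)})^2)$), so $w(Z^{(k)}) \leq w(K(X^{(k)}, Y^{(k)})) = O(w(X^{(k)})^3)$. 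An analogous orders-count for the bulky prefactor in \eqml{5-3} shows it to be $O(w(X^{(k)})^{2})$, and multiplying by $|f(m(X^{(k)}))| = O(w(X^{(k)}))$ and $w(1/F'(X^{(k)})) = O(w(X^{(k)}))$ yields $w(T) = O(w(X^{(k)})^{4})$, whence \eqref{6}.

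\textbf{Main obstacle.} The hardest part is the bookkeeping of orders in the denominators $(f(m(X^{(k)})) - f(m(Y^{(k)})))^{2}$, $(f(m(X^{(k)})) - f(m(Z^{(k)})))^{2}$ and $f(m(Y^{(k)})) - f(m(Z^{(k)}))$, all of which vanish as the iteration converges. I would handle this by Taylor expanding each $f(m(\cdot))$ around $x^*$ and using $f'(x^*) \neq 0$ (inherited from $0 \notin F'(X^{(0)})$) to extract clean powers of the individual errors $|m(X^{(k)}) - x^*|$, $|m(Y^{(k)}) - x^*|$, $|m(Z^{(k)}) - x^*|$, which are respectively $O(w(X^{(k)}))$, $O(w(X^{(k)})^2)$, $O(w(X^{(k)})^3)$. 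This exposes the cancellations between numerator and denominator powers and isolates the final factor of $w(X^{(k)})^4$ asserted in \eqref{6}.
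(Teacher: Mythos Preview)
Your overall strategy for the order-four estimate matches the paper's: bound $w(Y^{(k)})$ via Theorem~\ref{f5}, then $w(Z^{(k)})$, then $w(X^{(k+1)})$, using the mean value theorem to convert $|f(m(\cdot))|$ into a width and the Lipschitz property of $1/F'$ to gain one extra power of $w(X^{(k)})$ at each stage. The paper executes the middle and last bounds differently, though: rather than counting powers in numerator and denominator separately (which, as you note, is delicate because the denominators vanish), it algebraically rewrites \eqref{5-2} as
\[
Z^{(k)} = m(Y^{(k)}) - \frac{f(m(Y^{(k)}))}{\bigl(1 - f(m(Y^{(k)}))/f(m(X^{(k)}))\bigr)^{2}} \cdot \frac{1}{F'(X^{(k)})},
\]
so that the only small factor is the explicit $f(m(Y^{(k)}))$ and the remaining ratio is bounded by a fixed constant. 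The same trick applied to \eqref{5-3} isolates $f(m(Z^{(k)}))$. This sidesteps your ``main obstacle'' entirely: no Taylor expansion around $x^*$ is needed, only a uniform bound on the rewritten quotient. Your order-count would also work once you supply matching lower bounds on the denominators, but the rewriting is cleaner.

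For uniqueness in (ii) the paper's argument is much simpler than yours: since $0\notin F'(X^{(k)})$ one has $f'(x)\neq 0$ throughout $X^{(k)}$, so $f$ is strictly monotone there and can have at most one zero; existence then comes from Theorem~\ref{f6}. Your mean-value-theorem-plus-strict-inclusion route is not wrong, but it is unnecessary machinery given that monotonicity is immediate from the hypothesis.

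For the ``if'' direction in (i) your plan is in fact more careful than the paper's, which simply asserts that if $f$ has no zero then the sequence becomes empty. Your argument does, however, require $w(X^{(k)})\to 0$ before you can pass to the limit in $N(X^{(k)})$ and conclude $f(z)=0$; you should say explicitly why the widths shrink (for instance via the contraction built into the Newton substep under $0\notin F'(X^{(0)})$), since a nested sequence of nonempty intervals need not have a degenerate intersection.
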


\begin{proof}
$ (i) $ Assume that $ {X^{(0)}} $ contains a root $ {x^*} $, then Theorem \ref{f4-1} results $ {x^*}\in {Y^{(k)}} $, therefore from Theorem \ref{f6} we have $ {x^*}\in K({X^{(k)}}, {Y^{(k)}}) $ which means that $ {x^*}\in K({X^{(k)}}, {Y^{(k)}}) \cap {Y^{(k)}} = {Z^{(k)}} $. Similarly, Theorem \ref{f6} results $ {x^*}\in T({X^{(k)}}, {Y^{(k)}}, {Z^{(k)}}) $, so we get $ {x^*}\in T({X^{(k)}}, {Y^{(k)}}, {Z^{(k)}}) \cap {Z^{(k)}} $. Therefore, if $ T({X^{(k)}}, {Y^{(k)}}, {Z^{(k)}}) \cap {Z^{(k)}} = \emptyset $, then $ {X^{(0)}} $ cannot contain a root of $ f $. For the converse, assume that $ f $ has no zero in $ {X^{(0)}} $. Since $ \{X^{(k)}\} $ is nested interval sequence, it is clear that $ {X^{(k)}} = \emptyset, $  for $ k=0, 1, 2, \cdots . $
\\
$ (ii) $ Since $ 0 \notin F'( {{X^{(k)}}} ) $, then $ f'( x ) \neq 0 $ for all $ x\in {X^{(k)}} $ and $ f $ is monotonic on $ {X^{(k)}} $. Therefore, since $ f $ is continuous on $ {X^{(0)}} $, there can be at most one root in $ {X^{(0)}} $. In other words, it has at most one zero in $ {X^{(k)}} $. Hence, it is sufficient to find a zero $ x^* \in {X^{(k)}} $. Using the Theorem \ref{f6} it is clear that $ f $ has exactly one root in $ {X^{(k)}} $.
\\
Now we want to prove \eqref{6}. Since $ K({X^{(k)}}, {Y^{(k)}}) \subset {Y^{(k)}} $, thus from \eqref{5}, we get
\[{Z^{(k)}} = {m({Y^{(k)}}) - \dfrac{{f(m({X^{(k)}}))f(m({Y^{(k)}}))}}{{{{(f(m({X^{(k)}})) - f(m({Y^{(k)}})))}^2}}}\dfrac{{f(m({X^{(k)}}))}}{{F'({X^{(k)}})}}}.\]
It is clear that
\begin{equation}\label{7}
\begin{array}{l}
{Z^{(k)}} = m({Y^{(k)}}) - \dfrac{{f(m({Y^{(k)}}))}}{{{{\left( {1 - \left( {f(m({Y^{(k)}}))/f(m({X^{(k)}}))} \right)} \right)}^2}}}\dfrac{1}{{F'({X^{(k)}})}}.
\end{array}
\end{equation}
Let
\begin{equation}\label{11}
\left| {{{\left( {1 - \left( {f(m({Y^{(k)}}))/f(m({X^{(k)}}))} \right)} \right)}^2}} \right| \geq {K_1},
\end{equation}
From \eqref{7} and \eqref{11} we obtain
\begin{equation}\label{9-1}
\begin{array}{l}
w({Z^{(k)}}) = \dfrac{{|f(m({Y^{(k)}}))|}}{{{{\left| {1 - \left( {f(m({Y^{(k)}}))/f(m({X^{(k)}}))} \right)} \right|}^2}}}w\left( {\dfrac{1}{{F'({X^{(k)}})}}} \right)\\
~~~~~~~~~~ \leq \dfrac{{|f(m({Y^{(k)}}))|}}{{{K_1}}}w\left( {\dfrac{1}{{F'({X^{(k)}})}}} \right).
\end{array}
\end{equation}
Since $ {x^*} $ is a simple root of $ f, $ we can write
\[f(m({Y^{(k)}})) = f'(\xi)(m({Y^{(k)}}) - {x^*}),\]
where $ \xi $ is between $ m({Y^{(k)}}) $ and $ x^* $. Let $ |f'(\xi)|\leq K_2 $, since $ Y^{(k)} $ is generated from \eqref{1}, Theorem \ref{f5} leads to
\[|f(m({Y^{(k)}}))| = |f'(\xi)|\, |(m({Y^{(k)}}) - {x^*})|\leq {K_2} w({Y^{(k)}}) \leq {K_2}C \left( {w({X^{(k)}})} \right)^2 .\]
Also, from Definition \ref{f1} we have
\begin{equation}\label{12-11}
w\left(\dfrac{1}{F'(X^{(k)})}\right) \leq L_1 w({X^{(k)}}),
\end{equation}
therefore, we obtain
\begin{equation}\label{12-2}
w({Z^{(k)}}) \leq \dfrac{{L_1}{K_2}C}{{{K_1}}}{( {w({X^{(k)}})})^3}.
\end{equation}
Since $ T({X^{(k)}}, {Y^{(k)}}, {Z^{(k)}}) \subset {Z^{(k)}} $, thus from \eqref{5}, we get
\begin{equation}\label{9-2}
\begin{scriptsize}
\begin{array}{l}
{X^{(k + 1)}} = m({Z^{(k)}}) - \dfrac{{f(m({X^{(k)}}))f(m({Y^{(k)}}))f(m({Z^{(k)}}))\left( {f{{(m({X^{(k)}}))}^2} + f(m({Y^{(k)}}))\left( {f(m({Y^{(k)}})) - f(m({Z^{(k)}}))} \right)} \right)}}{{{{\left( {f(m({X^{(k)}})) - f(m({Y^{(k)}}))} \right)}^2}{{\left( {f(m({X^{(k)}})) - f(m({Z^{(k)}}))} \right)}^2}\left( {f(m({Y^{(k)}})) - f(m({Z^{(k)}}))} \right)}}\dfrac{{f(m({X^{(k)}}))}}{{F'({X^{(k)}})}}.
\end{array}
\end{scriptsize}
\end{equation}
It is clear that
\begin{equation}\label{9-4}
\begin{scriptsize}
\begin{array}{l}
{X^{(k + 1)}} = m({Z^{(k)}}) - \dfrac{{f(m({Z^{(k)}}))\left( {1 + \left( {f(m({Y^{(k)}}))/f{{(m({X^{(k)}}))}^2}} \right)\left( {f(m({Y^{(k)}})) - f(m({Z^{(k)}}))} \right)} \right)}}{{{{\left( {1 - \left( {f(m({Y^{(k)}}))/f(m({X^{(k)}}))} \right)} \right)}^2}{{\left( {1 - \left( {f(m({Z^{(k)}}))/f(m({X^{(k)}}))} \right)} \right)}^2}\left( {1 - \left( {f(m({Z^{(k)}}))/f(m({Y^{(k)}}))} \right)} \right)}}\dfrac{1}{{F'({X^{(k)}})}}
\end{array}
\end{scriptsize}
\end{equation}
Let
\begin{equation}\label{9-3}
\begin{scriptsize}
\begin{array}{l}
\left| {\dfrac{{\left( {1 + \left( {f(m({Y^{(k)}}))/f{{(m({X^{(k)}}))}^2}} \right)\left( {f(m({Y^{(k)}})) - f(m({Z^{(k)}}))} \right)} \right)}}{{{{\left( {1 - \left( {f(m({Y^{(k)}}))/f(m({X^{(k)}}))} \right)} \right)}^2}{{\left( {1 - \left( {f(m({Z^{(k)}}))/f(m({X^{(k)}}))} \right)} \right)}^2}\left( {1 - \left( {f(m({Z^{(k)}}))/f(m({Y^{(k)}}))} \right)} \right)}}} \right| \leq {K_3}
\end{array}
\end{scriptsize}
\end{equation}
From \eqref{9-4} and \eqref{9-3} we obtain
\begin{equation}\label{9-5}
\begin{scriptsize}
\begin{array}{l}
w({X^{(k + 1)}}) = \left| {\frac{{\left( {1 + \left( {f(m({Y^{(k)}}))/f{{(m({X^{(k)}}))}^2}} \right)\left( {f(m({Y^{(k)}})) - f(m({Z^{(k)}}))} \right)} \right)}}{{{{\left( {1 - \left( {f(m({Y^{(k)}}))/f(m({X^{(k)}}))} \right)} \right)}^2}{{\left( {1 - \left( {f(m({Z^{(k)}}))/f(m({X^{(k)}}))} \right)} \right)}^2}\left( {1 - \left( {f(m({Z^{(k)}}))/f(m({Y^{(k)}}))} \right)} \right)}}} \right||f(m({X^{(k)}}))|w\left( {\dfrac{1}{{F'({X^{(k)}})}}} \right)\\
~~~~~~~~~~~~~~ \leq {K_3}|f(m({X^{(k)}}))|w\left( {\frac{1}{{F'({X^{(k)}})}}} \right)
\end{array}
\end{scriptsize}
\end{equation}
Since $ {x^*} $ is a simple root of $ f, $ we can write
\begin{equation}\label{8-3}
f(m({Z^{(k)}})) = f'(\eta)(m({Z^{(k)}}) - {x^*}),
\end{equation}
where $ \eta $ is between $ m({Z^{(k)}}) $ and $ x^* $.  It is clear that
\begin{equation}\label{8-4}
|m({Z^{(k)}}) - {x^*}| \leq w({Z^{(k)}}).
\end{equation}
Let $ |f'({\eta})| \leq {K_4} $. From \eqref{12-11}, \eqref{12-2}, \eqref{9-5}, \eqref{8-3} and \eqref{8-4} we get
\[w({X^{(k + 1)}}) \leq \dfrac{{L_1^2{K_2}{K_3}{K_4}C}}{{{K_1}}}{(w({X^{(k)}}))^4}.\]
where
\[ \gamma = \dfrac{{L_1^2{K_2}{K_3}{K_4}C}}{{{K_1}}}. \]
\end{proof}

%%%%%%%%%%%%%%%%%%%%%%%%%%%%%%%%%%%%%%%%%%%%%%%%%%%%%

%%%%%%%%%%%%%%%%%%%%%%%%%%%%%%%%%%%%%%%%%%%%%%%%%%%%%

\section{Numerical results}\label{3}
In this section we report computational results. The list of test nonlinear functions are presented in Table \ref{tab:1}.
In Tables \ref{tab:2}, \ref{tab:3}, \ref{tab:4}, \ref{tab:5}, \ref{tab:6}, interval three-step Kung and Traub method \eqref{5} is compared with interval version of Traub's three-step method \eqref{2-1-1}, interval King method \eqref{2-1}, (with $ \beta =2 $), interval Ostrowski method \eqref{2-1}, (with $ \beta =0 $) and interval Newton method \eqref{1}. All methods are computed by using INTLAB toolbox created by Rump \cite{R}.\\
\begin{table}[h!]
\caption{Tested functions and initial intervals}\label{tab:1}
\resizebox{\textwidth}{!}{\begin{tabular}{lllll}
\hline\noalign{\smallskip}
\multicolumn{1}{l}{Example $ i $} & \multicolumn{1}{l}{Function $ f_i $} & \multicolumn{1}{l}{Root $ x^* $} & \multicolumn{1}{l}{Initial interval $ X^{(0)} $} & \multicolumn{1}{l}{Root enclosures}\\
\noalign{\smallskip}\hline\noalign{\smallskip}
\multicolumn{1}{l}{1} & \multicolumn{1}{l}{$ {x^3} + \sin \left( {\frac{x}{{\sqrt 3 }}} \right) - \frac{1}{4} $} & \multicolumn{1}{l}{$ 0.3568342187225045 $} & \multicolumn{1}{l}{$ [0,0.8] $} & \multicolumn{1}{l}{$ [0.35683421872250, 0.35683421872251] $ } \\
\multicolumn{1}{l}{2} & \multicolumn{1}{l}{$ \cos x + x - x^2 + x^5 $} & \multicolumn{1}{l}{$ -0.5333964635678204 $} & \multicolumn{1}{l}{$ [-0.9,-0.2] $} & \multicolumn{1}{l}{$ [-0.53339646356783, -0.53339646356782] $} \\
\multicolumn{1}{l}{3} & \multicolumn{1}{l}{$ {e^x} - {\sin ^3 x} $} & \multicolumn{1}{l}{$ -3.4623979938206757 $} & \multicolumn{1}{l}{$ [-3.5,-3.4] $} & \multicolumn{1}{l}{$ [-3.46239799382068, -3.46239799382067] $} \\
\multicolumn{1}{l}{4} & \multicolumn{1}{l}{$ (x - 1){e^{ - 2x}} + {x^3} $} & \multicolumn{1}{l}{$ 0.5391809932576055 $} & \multicolumn{1}{l}{$ [0.4,0.6] $} & \multicolumn{1}{l}{$ [0.53918099325760, 0.53918099325761] $ } \\
\multicolumn{1}{l}{5} & \multicolumn{1}{l}{$ {\sin ^2}({x^2}+1) - \frac{\sqrt{x+1}}{3} $} & \multicolumn{1}{l}{$ 1.1684762578039694 $} & \multicolumn{1}{l}{$ [1,1.2] $} & \multicolumn{1}{l}{$ [1.16847625780396, 1.16847625780397] $} \\
\noalign{\smallskip}\hline
\end{tabular}}
\end{table}
\begin{table}[h!]
\caption{Comparison of $ w({X^{(k)}}) $ for example $ {f_1}(x) $}\label{tab:2}
\resizebox{\textwidth}{!}{\begin{tabular}{cccccc}
\hline\noalign{\smallskip}
\multicolumn{1}{c}{Number of iterations $ k $} & \multicolumn{5}{c}{Methods} \\ \cline{2-6} & \multicolumn{1}{c}{\eqref{1}} & \multicolumn{1}{c}{\eqref{2-1}, $ \beta =0 $} & \multicolumn{1}{c}{\eqref{2-1}, $ \beta =2 $} & \multicolumn{1}{c}{\eqref{2-1-1}} & \multicolumn{1}{c}{\eqref{5}} \\
\noalign{\smallskip}\hline\noalign{\smallskip}
\multicolumn{1}{c}{1} & \multicolumn{1}{c}{$ 6.58\times 10^{-2} $} & \multicolumn{1}{c}{$ 7.65\times 10^{-3} $} & \multicolumn{1}{c}{$ 6.95\times 10^{-3} $} & \multicolumn{1}{c}{$ 9.81\times 10^{-4} $} & \multicolumn{1}{c}{$ 1.53\times 10^{-3} $} \\
\multicolumn{1}{c}{2} & \multicolumn{1}{c}{$ 1.06\times 10^{-3} $} & \multicolumn{1}{c}{$ 2.39\times 10^{-8} $} & \multicolumn{1}{c}{$ 4.76\times 10^{-8} $} & \multicolumn{1}{c}{$ 2.33\times 10^{-15} $} & \multicolumn{1}{c}{$ 2.22\times 10^{-16} $} \\
\multicolumn{1}{c}{3} & \multicolumn{1}{c}{$ 1.73 \times 10^{-7} $} & \multicolumn{1}{c}{$ 2.22\times 10^{-16} $} & \multicolumn{1}{c}{$ 2.22\times 10^{-16} $} & \multicolumn{1}{c}{$ 2.22\times 10^{-16} $} & \multicolumn{1}{c}{} \\
\multicolumn{1}{c}{4} & \multicolumn{1}{c}{$ 2.39 \times 10^{-15} $}
& \multicolumn{1}{c}{} & \multicolumn{1}{c}{} & \multicolumn{1}{c}{} & \multicolumn{1}{c}{} \\
\multicolumn{1}{c}{5} & \multicolumn{1}{c}{$ 2.22 \times 10^{-16} $}
& \multicolumn{1}{c}{} & \multicolumn{1}{c}{} & \multicolumn{1}{c}{} & \multicolumn{1}{c}{} \\
\noalign{\smallskip}\hline
\end{tabular}}
\end{table}\\
\begin{table}[h!]
\caption{Comparison of $ w({X^{(k)}}) $ for example $ {f_2}(x) $}\label{tab:3}
\resizebox{\textwidth}{!}{\begin{tabular}{cccccc}
\hline\noalign{\smallskip}
\multicolumn{1}{c}{Number of iterations $ k $} & \multicolumn{5}{c}{Methods} \\ \cline{2-6} & \multicolumn{1}{c}{\eqref{1}} & \multicolumn{1}{c}{\eqref{2-1}, $ \beta =0 $} & \multicolumn{1}{c}{\eqref{2-1}, $ \beta =2 $} & \multicolumn{1}{c}{\eqref{2-1-1}} & \multicolumn{1}{c}{\eqref{5}} \\
\noalign{\smallskip}\hline\noalign{\smallskip}
\multicolumn{1}{c}{1} & \multicolumn{1}{c}{$ 2.4\times 10^{-2} $} & \multicolumn{1}{c}{$ 2.91\times 10^{-3} $} & \multicolumn{1}{c}{$ 2.61\times 10^{-3} $} & \multicolumn{1}{c}{$ 5.46\times 10^{-4} $} & \multicolumn{1}{c}{$ 5.01\times 10^{-4} $} \\
\multicolumn{1}{c}{2} & \multicolumn{1}{c}{$ 1.29\times 10^{-4} $} & \multicolumn{1}{c}{$ 7.81\times 10^{-10} $} & \multicolumn{1}{c}{$ 1.91\times 10^{-9} $} & \multicolumn{1}{c}{$ 5.55\times 10^{-16} $} & \multicolumn{1}{c}{$ 2.22\times 10^{-16} $} \\
\multicolumn{1}{c}{3} & \multicolumn{1}{c}{$ 1.97\times 10^{-9} $} & \multicolumn{1}{c}{$ 2.22\times 10^{-16} $} & \multicolumn{1}{c}{$ 2.22\times 10^{-16} $} & \multicolumn{1}{c}{} & \multicolumn{1}{c}{} \\
\multicolumn{1}{c}{4} & \multicolumn{1}{c}{$ 2.22 \times 10^{-16} $}
& \multicolumn{1}{c}{} & \multicolumn{1}{c}{} & \multicolumn{1}{c}{} & \multicolumn{1}{c}{} \\
\noalign{\smallskip}\hline
\end{tabular}}
\end{table}\\
\begin{table}[h!]
\caption{Comparison of $ w({X^{(k)}}) $ for example $ {f_3}(x) $}\label{tab:4}
\resizebox{\textwidth}{!}{\begin{tabular}{cccccc}
\hline\noalign{\smallskip}
\multicolumn{1}{c}{Number of iterations $ k $} & \multicolumn{5}{c}{Methods} \\ \cline{2-6} & \multicolumn{1}{c}{\eqref{1}} & \multicolumn{1}{c}{\eqref{2-1}, $ \beta =0 $} & \multicolumn{1}{c}{\eqref{2-1}, $ \beta =2 $} & \multicolumn{1}{c}{\eqref{2-1-1}} & \multicolumn{1}{c}{\eqref{5}} \\
\noalign{\smallskip}\hline\noalign{\smallskip}
\multicolumn{1}{c}{1} & \multicolumn{1}{c}{$ 8\times 10^{-3} $} & \multicolumn{1}{c}{$ 7.07\times 10^{-4} $} & \multicolumn{1}{c}{$ 6.74\times 10^{-4} $} & \multicolumn{1}{c}{$ 1.22\times 10^{-4} $} & \multicolumn{1}{c}{$ 5.08\times 10^{-5} $} \\
\multicolumn{1}{c}{2} & \multicolumn{1}{c}{$ 5.94\times 10^{-5} $} & \multicolumn{1}{c}{$ 5.89\times 10^{-11} $} & \multicolumn{1}{c}{$ 1.69\times 10^{-10} $} & \multicolumn{1}{c}{$ 8.88\times 10^{-16} $} & \multicolumn{1}{c}{$ 8.88\times 10^{-16} $} \\
\multicolumn{1}{c}{3} & \multicolumn{1}{c}{$ 1.25\times 10^{-9} $} & \multicolumn{1}{c}{$ 8.88\times 10^{-16} $} & \multicolumn{1}{c}{$ 8.88\times 10^{-16} $} & \multicolumn{1}{c}{} & \multicolumn{1}{c}{} \\
\multicolumn{1}{c}{4} & \multicolumn{1}{c}{$ 8.88\times 10^{-16} $}
& \multicolumn{1}{c}{} & \multicolumn{1}{c}{} & \multicolumn{1}{c}{} & \multicolumn{1}{c}{} \\
\noalign{\smallskip}\hline
\end{tabular}}
\end{table}\\
\begin{table}[h!]
\caption{Comparison of $ w({X^{(k)}}) $ for example $ {f_4}(x) $}\label{tab:5}
\resizebox{\textwidth}{!}{\begin{tabular}{cccccc}
\hline\noalign{\smallskip}
\multicolumn{1}{c}{Number of iterations $ k $} & \multicolumn{5}{c}{Methods} \\ \cline{2-6} & \multicolumn{1}{c}{\eqref{1}} & \multicolumn{1}{c}{\eqref{2-1}, $ \beta =0 $} & \multicolumn{1}{c}{\eqref{2-1}, $ \beta =2 $} & \multicolumn{1}{c}{\eqref{2-1-1}} & \multicolumn{1}{c}{\eqref{5}} \\
\noalign{\smallskip}\hline\noalign{\smallskip}
\multicolumn{1}{c}{1} & \multicolumn{1}{c}{$ 2.92\times 10^{-2} $} & \multicolumn{1}{c}{$ 2.45\times 10^{-3} $} & \multicolumn{1}{c}{$ 2.35\times 10^{-3} $} & \multicolumn{1}{c}{$ 3.44\times 10^{-4} $} & \multicolumn{1}{c}{$ 2.06\times 10^{-4} $} \\
\multicolumn{1}{c}{2} & \multicolumn{1}{c}{$ 3.88\times 10^{-4} $} & \multicolumn{1}{c}{$ 2.34\times 10^{-10} $} & \multicolumn{1}{c}{$ 5.55\times 10^{-10} $} & \multicolumn{1}{c}{$ 2.22\times 10^{-16} $} & \multicolumn{1}{c}{$ 2.22\times 10^{-16} $} \\
\multicolumn{1}{c}{3} & \multicolumn{1}{c}{$ 3.51\times 10^{-10} $} & \multicolumn{1}{c}{$ 2.22\times 10^{-16} $} & \multicolumn{1}{c}{$ 2.22\times 10^{-16} $} & \multicolumn{1}{c}{} & \multicolumn{1}{c}{} \\
\multicolumn{1}{c}{4} & \multicolumn{1}{c}{$ 2.22 \times 10^{-16} $}
& \multicolumn{1}{c}{} & \multicolumn{1}{c}{} & \multicolumn{1}{c}{} & \multicolumn{1}{c}{} \\
\noalign{\smallskip}\hline
\end{tabular}}
\end{table}\\
\begin{table}[h!]
\caption{Comparison of $ w({X^{(k)}}) $ for example $ {f_5}(x) $}\label{tab:6}
\resizebox{\textwidth}{!}{\begin{tabular}{cccccc}
\hline\noalign{\smallskip}
\multicolumn{1}{c}{Number of iterations $ k $} & \multicolumn{5}{c}{Methods} \\ \cline{2-6} & \multicolumn{1}{c}{\eqref{1}} & \multicolumn{1}{c}{\eqref{2-1}, $ \beta =0 $} & \multicolumn{1}{c}{\eqref{2-1}, $ \beta =2 $} & \multicolumn{1}{c}{\eqref{2-1-1}} & \multicolumn{1}{c}{\eqref{5}} \\
\noalign{\smallskip}\hline\noalign{\smallskip}
\multicolumn{1}{c}{1} & \multicolumn{1}{c}{$ 5.33\times 10^{-2} $} & \multicolumn{1}{c}{$ 5.74\times 10^{-3} $} & \multicolumn{1}{c}{$ 5.61\times 10^{-3} $} & \multicolumn{1}{c}{$ 1.95\times 10^{-3} $} & \multicolumn{1}{c}{$ 1\times 10^{-3} $} \\
\multicolumn{1}{c}{2} & \multicolumn{1}{c}{$ 1.39\times 10^{-3} $} & \multicolumn{1}{c}{$ 1.78\times 10^{-8} $} & \multicolumn{1}{c}{$ 1.37\times 10^{-8} $} & \multicolumn{1}{c}{$ 1.64\times 10^{-13} $} & \multicolumn{1}{c}{$ 6.66\times 10^{-16} $} \\
\multicolumn{1}{c}{3} & \multicolumn{1}{c}{$ 5.67 \times 10^{-7} $} & \multicolumn{1}{c}{$ 6.66\times 10^{-16} $} & \multicolumn{1}{c}{$ 6.66\times 10^{-16} $} & \multicolumn{1}{c}{$ 4.44\times 10^{-16} $} & \multicolumn{1}{c}{} \\
\multicolumn{1}{c}{4} & \multicolumn{1}{c}{$ 9.77 \times 10^{-15} $}
& \multicolumn{1}{c}{} & \multicolumn{1}{c}{} & \multicolumn{1}{c}{} & \multicolumn{1}{c}{} \\
\multicolumn{1}{c}{5} & \multicolumn{1}{c}{$ 6.66 \times 10^{-16} $}
& \multicolumn{1}{c}{} & \multicolumn{1}{c}{} & \multicolumn{1}{c}{} & \multicolumn{1}{c}{} \\
\noalign{\smallskip}\hline
\end{tabular}}
\end{table}\\
Numerical results show that the interval extension of the three-step Kung and Traub's method is faster than the existing methods.

%%%%%%%%%%%%%%%%%%%%%%%%%%%%%%%%%%%%%%%%%%%%%%%%%%%%%

\section{Conclusion}\label{sec:4}

%%%%%%%%%%%%%%%%%%%%%%%%%%%%%%%%%%%%%%%%%%%%%%%%%%%%%

In this paper, an interval extension of the three-step Kung and Traub's method which calculates the enclosure solutions of a given nonlinear equation is produced. Also, error bound and convergence rate were studied. This algorithm is tested using some examples via INTLAB. Numerical results show that the interval extension of the three-step Kung and Traub's method is better than the existing methods.

%%%%%%%%%%%%%%%%%%%%%%%%%%%%%%%%%%%%%%%%%%%%%%%%%%%%%

\end{document}